\numberwithin{equation}{section}
\providecommand{\U}[1]{\protect\rule{.1in}{.1in}}
\theoremstyle{plain}
\newtheorem{thm}{Theorem}[section]
\newtheorem{prop}[thm]{Proposition}
\theoremstyle{definition}
\newtheorem{?}[thm]{Problem}
\theoremstyle{definition}
\newtheorem*{nt*}{Notation}
\newcommand {\C} {\mathbb C}
\begin{document}
\title[]{Cyclicity of composition operators on the Paley-Wiener spaces}

\author{Pham Viet Hai, Waleed Noor and Osmar Reis Severiano}%
\address[P. V. Hai]{Faculty of Mathematics and Informatics, Hanoi University of Science and Technology, Khoa Toan-Tin, Dai hoc Bach khoa Hanoi, 1 Dai Co Viet, Hanoi, Vietnam.}%
\email{hai.phamviet@hust.edu.vn}
\address[W. Noor]{IMECC, Universidade Estadual de Campinas, Campinas-SP, Brazil}
\email{waleed@unicamp.br}
\address [O. R. Severiano]{ IMECC, Universidade Estadual de Campinas, Campinas, Brazil}
\email{osmar.rrseveriano@gmail.com}
\thanks{P.V. Hai is funded by Vietnam National Foundation for Science and Technology Development (NAFOSTED) under grant number 101.02-2021.24, O.R. Severiano is a postdoctoral fellow at the Programa de Matemática
and is supported by UNICAMP (Programa de Pesquisador de Pós-Doutorado PPPD) }

\keywords{Cyclic operator; composition operator; Paley-Wiener space}
	
\begin{abstract} In this article we characterize the cyclicity of bounded composition operators $C_\phi f=f\circ \phi$ on the Paley-Wiener spaces of entire functions $B^2_\sigma$ for $\sigma>0$. We show that $C_\phi$ is cyclic precisely when $\phi(z)=z+b$ where either $b\in\mathbb{C}\setminus\mathbb{R}$ or $b\in\mathbb{R}$ with $0<|b|\leq \pi/\sigma$. We also describe when the reproducing kernels of $B^2_\sigma$ are cyclic vectors for $C_\phi$ and see that this is related to a question of completeness of exponential sequences in $L^2[-\sigma,\sigma]$. The interplay between cyclicity and complex symmetry plays a key role in this work.
\end{abstract}
	
\maketitle

%===================================================================================================
\section{Introduction}
%===================================================================================================
% Let $\mathbb{C}$ denote the complex plane and $\mathbb{R}$ the real line. An entire function $f$ is said to be of \textit{exponential type} if the inequality $
%|f(z)|\leq Ae^{B|z|}$ holds for all $z\in \mathbb{C}$
%and for some positive constants $A$ e $B.$ In this case, the \textit{exponential type} of $f$ is defined to be the number 
%\begin{align*}
%%\sigma=\limsup\limits_{r\rightarrow\infty}\frac{\log M_f(r)}{r},
%\end{align*}
%where $M_f(r)=\max\{|f(z)|:|z|=r\}. $ The identically zero function has exponential type zero, by convention.  We refer the reader to \cite{Branges,Levin,young} for background on entire functions. For $\sigma>0,$ the \emph{Paley-Wiener space} $B_\sigma^2$ consists of all entire functions of exponential type less or equal to $\sigma$ whose restriction to the real line belong to the space $L^2(\mathbb{R}).$	The space $B_\sigma^2$ is a  reproducing kernel Hilbert space when endowed with the norm
%\begin{equation*}
%\left\| f\right\|^2 = \int^{\infty}_{-\infty}|f(t)|^2dt, \quad f\in B^2_{\sigma}.
%%\end{equation*}
%For each $w\in \mathbb{C},$ let $k_w$ denote the \textit{reproducing kernel} for $B_{\sigma}^2$ at %$w$; that is,
%%\begin{equation*}
%k_w(z)=\frac{\sin \sigma(z-\overline{w})}{\pi(z-\overline{w})}, \quad z\in \mathbb{C}.
%\end{equation*}
%This means that for each $w\in \mathbb{C},$ the evaluation functional $f\mapsto f(w)$ is bounded on $B^2_{\sigma}$ and $\left\langle f,k_w\right\rangle =f(w)$ for all $f\in B_{\sigma}^2.$ 

A bounded linear operator $T$ on a separable Hilbert space $\mathcal{H}$ is \emph{cyclic} with \emph{cyclic vector} $f\in\mathcal{H}$ if the linear span of the \emph{orbit} Orb$(f,T)=\{T^nf:n=0,1,2,\ldots\}$ is dense in $\mathcal{H}$. Similarly $T$ is \emph{supercyclic} if all scalar multiples of elements in Orb$(f,T)$ are dense in $\mathcal{H}$, and \emph{hypercyclic} if the orbit itself is dense. Cyclicity is a central theme in linear dynamics and has been studied widely due to its connection with the Invariant Subspace Problem (ISP). See the recent monographs \cite{Bayart-Matheron} and \cite{Linear Chaos} to learn more about linear dynamics. 

On the other hand a bounded linear operator $T$ on $\mathcal{H}$ is \emph{complex symmetric} if there exists an orthonormal basis for $\mathcal{H}$ with respect to which $T$ has a self-transpose matrix representation. An equivalent definition also exists. A \emph{conjugation} is a conjugate-linear operator $J:\mathcal{H}\to\mathcal{H}$ that satisfies the conditions \\

(a) $J$ is \emph{isometric}: $\langle Jf, Jg \rangle=\langle g,f \rangle$ $\forall$ $f,g\in\mathcal{H}$,\\

(b) $J$ is \emph{involutive}: $J^2=I$.\\ \\
We say that $T$ is $J$-\emph{symmetric} if $JT=T^*J$, and complex symmetric if there exists a conjugation $J$ with respect to which $T$ is $J$-symmetric. Complex symmetric operators are generalizations of complex symmetric matrices and of normal operators, and their study was initiated by Garcia, Putinar and Wogen \cite{Garcia 1, Garcia 2, Garcia 3, Garcia 4}.  If $JT=T^*J$ for some operator $T$ and conjugation $J$, then $T$ is cyclic (supercyclic) if and only if $T^*$ is cyclic (supercyclic). The conjugation $J$ serves as a bijection between the cyclic vectors of $T$ and $T^*$.

Let $\mathcal{S}$ be a  space of functions defined on a set $\Omega.$ 
A \emph{composition operator} $C_{\phi}$
on $\mathcal{S}$ with \emph{symbol} $\phi:\Omega\to\Omega$  is defined as
\[
C_{\phi}f=f\circ \phi, \quad f\in \mathcal{S}.
\]
The cyclicity of composition operators has been studied on several holomorphic function spaces (see \cite{Bayart, Gallardo, Noor}) and has become an active research topic within linear dynamics. In \cite{chacon2007composition}, Chácon, Chácon and Giménez initiated the study of composition operators on the classical Paley-Wiener space $B^2_{\pi}.$ They prove that $C_\phi$ is bounded on $B^2_{\pi}$ precisely when
\begin{align}\label{b1}
\phi(z)=az+b,\ \text{where} \ a\in \mathbb{R} \ \text{with} \ 0<|a|\leq 1 \ \text{and} \ b\in \mathbb{C}.
\end{align}
More recently Ikeda, Ishikawa and Yoshihiro \cite{Ikeda} show that this is true even in the general context of reproduction kernel Hilbert spaces of entire functions on $\mathbb{C}^n$. 

The cyclicity and complex symmetry of $C_\phi$ on the Hardy-Hilbert space $H^2(\mathbb{C}_+)$ of the half-plane were characterized by  Noor and Severiano \cite{Noor} for affine symbols $\phi$. In this article we study the cyclicity and complex symmetry of composition operators on the \textit{Paley-Wiener spaces} $B^2_{\sigma}$ for all $\sigma>0$. From results in \cite{Ikeda} it follows that the only bounded composition operators $C_\phi$ on $B^2_{\sigma}$ for $\sigma>0$ are those induced by symbols of the form \eqref{b1}. The Paley-Wiener spaces $B^2_\sigma$ are isometrically embedded into $H^2(\mathbb{C}_+)$ as so-called \emph{model subspaces} $K_\Theta$ (see \cite[page 305]{Nikolski}) defined as
\[
K_\Theta:=H^2(\mathbb{C}_+)\ominus\Theta H^2(\mathbb{C}_+) \ \ \mathrm{where} \ \ \Theta(z)=e^{i\sigma z}.
\]
Therefore one can view this work as the study of $C_\phi$ on some model subspaces of $H^2(\mathbb{C}_+)$. In contrast with $H^2(\mathbb{C}_+)$, our results show the existence of non-normal complex symmetric $C_\phi$ (see Theorem \ref{normal}), and that the cyclicity of $C_\phi$ and its adjoint $C_\phi^*$ in $B^2_\sigma$ depends on $\sigma>0$ (see Theorem \ref{cyclic}). In particular, we show that no $C_\phi$ is supercyclic on any $B^2_\sigma$. Finally we characterizese the reproducing kernels $(k_w)_{w\in\mathbb{C}}$ in $B^2_\sigma$ that are cyclic vectors for $C_\phi$ and show that this is closely related to the completeness of exponential sequences $(e^{i\lambda_n t})_{n\in\mathbb{N}}$ in $L^2[-\sigma,\sigma]$ (see Theorem \ref{cyclic vector}). The latter is a central question in non-harmonic Fourier analysis (see \cite{young}). Some of the main results are summarized in the following table.

\begin{table}[h]
\centering
\caption{Main results for $C_\phi$ on $B^2_\sigma$ where $\phi(z)=az+b$.}
\label{tab:example}
\begin{tabular}{|c|c|} % 'c' for centered columns; adjust as needed

\hline
$C_\phi$ cyclic & $a=1$ with $b\in\mathbb{C}\setminus\mathbb{R}$ or $0<|b|\leq\pi/\sigma$, $b\in\mathbb{R}$ \\
\hline
$C^*_\phi$ cyclic & $0<|a|<1$, $b\in\mathbb{C}$ or $C_\phi$ cyclic \\
\hline
$C_\phi$ supercyclic & Never \\
\hline
$C_\phi$ complex symmetric & $a=1$, $b\in\mathbb{C}$ or $a=-1$, $b\in\mathbb{C}$  \\ 
\hline
$C_\phi$ normal & $a=1$, $b\in\mathbb{C}$ or $a=-1$, $b\in\mathbb{R}$  \\

\hline
\end{tabular}
\end{table}
Before we begin, it is necessary to mention an error in \cite{chacon2007composition} during the computation of an adjoint formula for $C_\phi$ on $B^2_\pi$ which then leads to an incomplete description of normal $C_\phi$. This is discussed and corrected in Section 2.

\section{Preliminaries}
%====================================================================================================
An entire function $f$ is of \textit{exponential type} if the inequality $
|f(z)|\leq Ae^{B|z|}$ holds for all $z\in \mathbb{C}$
and for some constants $A,B>0$. The exponential type $\sigma$ of $f$ is defined as the infimum of all $B>0$ for which this inequality holds and can be determined by 
\begin{align*}
\sigma=\limsup\limits_{r\rightarrow\infty}\frac{\log M_f(r)}{r} \ \  \mathrm{where} \ \ M_f(r)=\max_{|z|=r}|f(z).
\end{align*}

\subsection{Paley-Wiener spaces \texorpdfstring{$B^2_{\sigma}$}{lg}}  For $\sigma>0,$ the \emph{Paley-Wiener space} $B_\sigma^2$ consists of all entire functions of exponential type $\leq\sigma$ whose restrictions to $\mathbb{R}$ belong to $L^2(\mathbb{R}).$	The space $B_\sigma^2$ is a  reproducing kernel Hilbert space when endowed with the norm
\begin{equation*}
\left\| f\right\|^2 = \int^{\infty}_{-\infty}|f(t)|^2\mathrm{d}t, \quad f\in B^2_{\sigma}.
\end{equation*}
For each $w\in \mathbb{C},$ let $k_w$ denote the reproducing kernel for $B_{\sigma}^2$ at $w$ defined by
\begin{equation*}
k_w(z)=\frac{\sin \sigma(z-\overline{w})}{\pi(z-\overline{w})}, \quad z\in \mathbb{C}
\end{equation*}
and which satisfies the basic relation $\left\langle f,k_w\right\rangle =f(w)$ for all $f\in B_{\sigma}^2.$ The Paley-Wiener Theorem states that any $f\in B^2_\sigma$ can be represented as the \emph{inverse} Fourier transform 
\[
f(z)=(\mathfrak{F}^{-1}F)(z):=\frac{1}{\sqrt{2\pi}}\int_{-\sigma}^\sigma F(t)e^{izt}dt
\]
of some $F\in L^2[-\sigma,\sigma]$ where the Fourier transform $\mathfrak{F}:B^2_\sigma\to L^2[-\sigma,\sigma]$ is an isometric isomorphism. We denote $\widehat{f}:=\mathfrak{F}f$ for $f\in B^2_\sigma$ and in particular $(\widehat{k_w})(t)=e^{-i\bar{w}t}$ for $w\in\mathbb{C}$.
The monographs \cite{Branges} and \cite{young} may be consulted for more information about these spaces.
%===================================================================================================
\subsection{Composition operators}
%===================================================================================================
The composition operator $C_{\phi}$ is bounded on $B^2_{\sigma}$ if and only if  its inducing symbol $\phi$ has the form 
\begin{align}\label{symbol}
\phi(z)=az+b,\ \text{where} \ a\in \mathbb{R} \ \text{with} \ 0<|a|\leq 1 \ \text{and} \ b\in \mathbb{C}
\end{align}	
%(the proof of this fact is similar to what was done in \cite[Theorem 2.4.]{chacon2007composition})
with the usual action of $C_{\phi}^*$ on reproducing kernels determined by $C_{\phi}^*k_w=k_{\phi(w)}$ for $w\in \mathbb{C}.$ For each $n\in \mathbb{N},$ the $n$-iterate of the self-map $\phi:\mathbb{C}\rightarrow \mathbb{C}$ is denoted by $\phi^{[n]}$ where
\begin{align}\label{iterate}
\phi^{[n]}(z)= \begin{cases}z+n b, & \text{if} \ a=1, \\ 
a^{n} z+\frac{\left(1-a^{n}\right)}{1-a}b,  & \text{if} \ a \neq 1 
\end{cases}
\end{align}	
and $C_{\phi}^n=\C_{\phi^{[n]}}.$ We see from \eqref{iterate} that if $0<|a|<1$ and $\alpha:=\frac{b}{1-a},$ then $\alpha$ is an attractive fixed point of $\phi$, that is,  $\phi^{[n]}(z)\rightarrow \alpha$ as $n\rightarrow \infty$ for all $z\in \mathbb{C}.$

\subsection{Adjoints of Composition operators} In \cite{chacon2007composition} the authors show that $C_\phi$ and $C_\phi^*$ on $B^2_\pi$ are unitarily equivalent, via the Fourier transform $\mathfrak{F}$, to a pair of weighted composition operators $\widehat{C}_{\phi}$ and $\widehat{C}_{\phi}^*$ on $L^2[-\pi,\pi]$ respectively. For simplicity they assume $a>0$ which is sufficient for their results. However, as we shall see in the next section, the cases $a>0$ and $a<0$ lead to very distinct outcomes for the complex symmetry and cyclicity of $C_\phi$. Therefore for the sake of completeness we provide a proof of their result for all $0<|a|\leq 1$.

\begin{prop}\label{adjoint formula} If $\phi(z)=az+b$ where $a\in \mathbb{R}$ with $0<|a|\leq 1$, $b\in \mathbb{C},$ then $C_{\phi}$ on $ B^2_{\sigma}$ is unitarily equivalent to a weighted composition operator $\widehat{C}_{\phi}$ on $L^2[-\sigma,\sigma]$ defined by 
\[(\widehat{C}_{\phi}F)(t)=\frac{1}{|a|}\chi_{(-|a|\sigma, |a|\sigma)}(t)e^{\frac{\mathrm{i}bt}{a}}F\left(\frac{t}{a}\right)\] 
where $\chi_{(c,d)}$ denotes a characteristic function. Moreover we have $(\widehat{C}_{\phi}^*F)(t)=\overline{e^{ibt}}F(at).$
\end{prop}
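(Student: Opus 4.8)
The plan is to realize the unitary equivalence through the Paley--Wiener Fourier transform $\mathfrak{F}\colon B^2_\sigma\to L^2[-\sigma,\sigma]$, which is an isometric isomorphism by the Paley--Wiener theorem. Setting $\widehat{C}_\phi:=\mathfrak{F}C_\phi\mathfrak{F}^{-1}$, the adjoint formula then follows automatically from $\widehat{C}_\phi^{*}=\mathfrak{F}C_\phi^{*}\mathfrak{F}^{-1}$ since $\mathfrak{F}$ is unitary, so it suffices to derive the two weighted-composition kernels by direct computation on $L^2[-\sigma,\sigma]$. Note that the boundedness of $C_\phi$ for symbols of the form \eqref{symbol} guarantees $C_\phi f\in B^2_\sigma$, so all the integrals below are genuine $L^2$ expressions and no convergence issue arises.

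For the first formula I would take $F\in L^2[-\sigma,\sigma]$, write $f=\mathfrak{F}^{-1}F$ via the inverse transform, and expand
\[
(C_\phi f)(z)=f(az+b)=\frac{1}{\sqrt{2\pi}}\int_{-\sigma}^\sigma F(t)\,e^{ibt}\,e^{iazt}\,dt.
\]
The decisive step is the substitution $s=at$, after which the integrand becomes $\tfrac{1}{|a|}F(s/a)e^{ibs/a}e^{izs}$ and the resulting expression is recognized as $\mathfrak{F}^{-1}$ applied to the function $\tfrac{1}{|a|}\chi_{(-|a|\sigma,|a|\sigma)}(s)e^{ibs/a}F(s/a)$. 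This identifies $\widehat{C}_\phi$ with the stated kernel, the characteristic function appearing precisely because the image interval under $s=at$ is $(-|a|\sigma,|a|\sigma)\subseteq(-\sigma,\sigma)$.

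For the adjoint I would compute $\langle\widehat{C}_\phi F,G\rangle$ directly for $F,G\in L^2[-\sigma,\sigma]$, use the characteristic function to restrict the domain, and substitute $u=t/a$ to obtain
\[
\langle\widehat{C}_\phi F,G\rangle=\int_{-\sigma}^\sigma F(u)\,\overline{\,\overline{e^{ibu}}\,G(au)\,}\,du,
\]
whence $(\widehat{C}_\phi^{*}G)(t)=\overline{e^{ibt}}\,G(at)$. Here the hypothesis $|a|\le 1$ is used to ensure $au\in(-\sigma,\sigma)$ almost everywhere, so that $G(at)$ is a well-defined element of $L^2[-\sigma,\sigma]$.

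The main obstacle, and the reason the computation must be redone, is the sign of $a$. When $a<0$ the substitution $s=at$ (and likewise $u=t/a$) reverses orientation, sending $(-\sigma,\sigma)$ to $(|a|\sigma,-|a|\sigma)$; after flipping the limits of integration the Jacobian factor $1/a<0$ combines with the reversal to produce the positive factor $1/|a|$, and the endpoints resolve to $\pm|a|\sigma$. The content of the proposition is that the $a<0$ bookkeeping collapses to the \emph{same} formula expressed through $|a|$ as in the $a>0$ case treated in \cite{chacon2007composition}; keeping the signs straight in this orientation-reversing change of variables is the only delicate point.
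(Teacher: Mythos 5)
Your proposal is correct and takes essentially the same route as the paper: conjugate $C_\phi$ by the Paley--Wiener transform $\mathfrak{F}$, perform the substitution $s=at$ in the inversion integral to read off the weighted-composition kernel, and obtain the adjoint by a change of variables in the inner product. The only divergence is in the case $a<0$, where the paper factors $\phi=\eta\circ\psi$ with $\eta(z)=-z$ and $\psi(z)=-az+b$ to reduce to the positive-coefficient computation, whereas you carry out the orientation-reversing substitution directly and correctly observe that the negative Jacobian $1/a$ combines with the flipped limits to produce $1/|a|$ and the endpoints $\pm|a|\sigma$; both bookkeeping schemes yield the stated formula.
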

\begin{proof} We first assume $0<a\leq 1.$ For each $f\in B^2_{\sigma},$ we have
\begin{align}
(C_{\phi}f)(z)&=f(az+b)=\frac{1}{\sqrt{2\pi}}\int^\sigma_{-\sigma}\widehat{f}(t)e ^{i(az+b)t}dt\nonumber\\
&=\frac{1}{\sqrt{2\pi}}\int^\sigma_{-\sigma}\widehat{f}(t)e ^{iazt}e^{ibt}dt=\frac{1}{\sqrt{2\pi}}\int^{a\sigma}_{-a\sigma}\frac{1}{a}\widehat{f}\left(\frac{s}{a}\right)e ^{isz}e^{ib\left(\frac{s}{a}\right)}ds\nonumber\\
&=\frac{1}{\sqrt{2\pi}}\int^{\sigma}_{-\sigma}\frac{1}{a}\chi_{(-a\sigma, a\sigma)}(t)e^{\frac{ibs}{a}}\widehat{f}\left(\frac{s}{a}\right)e ^{isz}ds\nonumber\\
&=\frac{1}{\sqrt{2\pi}}\int^{\sigma}_{-\sigma}(\widehat{C}_{\phi}\widehat{f})(s)e ^{isz}ds=(\mathfrak{F}^{-1}\widehat{C}_\phi\mathfrak{F}f)(z)\nonumber
\end{align}
which gives $\mathfrak{F}C_{\phi}= \widehat{C}_{\phi}\mathfrak{F}$. For $-1\leq a <0$, first consider the symbol $\eta(z)=-z$. Then the simple change of variables $s=-t$ again gives
\begin{align}\label{eqc3}
(C_{\eta}f)(z)&=f(-z)=\frac{1}{\sqrt{2\pi}}\int^{\sigma}_{-\sigma}(\widehat{C}_{\eta}\widehat{f})(s)e ^{isz}ds=(\mathfrak{F}^{-1}\widehat{C}_\eta\mathfrak{F}f)(z)
\end{align}
and hence $\mathfrak{F}C_{\eta}= \widehat{C}_{\eta}\mathfrak{F}.$  Now let $\psi(z)=-az+b$ and note that $C_{\phi}=C_{\eta}C_{\psi}.$ Therefore 
\begin{align*}
\widehat{C}_{\eta}\widehat{C}_{\psi}=(\mathfrak{F}C_{\eta}\mathfrak{F}^{-1})(\mathfrak{F}C_{\psi}\mathfrak{F}^{-1})=\mathfrak{F}C_{\phi}\mathfrak{F}^{-1}
\end{align*}
and to verify that indeed $\widehat{C}_\phi=\widehat{C}_{\eta}\widehat{C}_{\psi}$, one easily sees that
\begin{align*}
(\widehat{C}_{\eta}\widehat{C}_{\psi}F)(t)&=\frac{1}{-a}e^{-\frac{ibt}{-a}}\chi_{(a\sigma, -a\sigma)}(-t)F\left(\frac{-t}{-a}\right)
=(\widehat{C}_{\phi}F)(t)
\end{align*}
for all $F\in L^2[-\sigma,\sigma]$. Therefore $\mathfrak{F}C_{\phi}= \widehat{C}_{\phi}\mathfrak{F}$ for all $-1\leq a<0$ as well. Since the Fourier transform $\mathfrak{F}:B^2_\sigma\to L^2[-\sigma,\sigma]$ is an isometric isomorphism, the proof is complete. The adjoint 
formula follows by the change of variables $t=s/a$ as follows:
\[
\langle \widehat{C}_{\phi}^*F, G\rangle=\int^\sigma_{-\sigma}\overline{e^{ibt}}F(at)\overline{G(t)}dt=\int^{|a|\sigma}_{-|a|\sigma}F(s)\overline{\frac{1}{|a|}e^{\frac{\mathrm{i}bs}{a}}G\left(\frac{s}{a}\right)}ds=\langle F, \widehat{C}_{\phi}G\rangle
\]
for all $F,G\in L^2[-\sigma,\sigma]$. This completes the proof of the result.
\end{proof}
In \cite[p. 2209]{chacon2007composition} it is claimed that if $\phi=az+b$ with $0<a<1,$ then the adjoint of $C_\phi$ is 
\begin{align*}
(C_{\phi}^*f)(z)=\frac{1}{a}f\left(\frac{z-\overline{b}}{a}\right), \quad f\in B^2_{\pi}.
\end{align*}
However this cannot be correct since if $f$ has exponential type $\pi$ then $f(z/a)$ must have type $\pi/a>\pi$. Hence $C_\phi^*$ is not well-defined on $B^2_{\pi}$. This unfortunately leads to an incomplete description of normal composition operators (compare \cite[Prop. 2.6]{chacon2007composition} with Table $1$).

\section{Cyclicity and complex symmetry of  \texorpdfstring{$C_{\phi}$}{lg}}\label{s6}
%==================================================================================================
One of the main leitmotifs of this article is to show how results about cyclicity can lead to results about complex symmetry and vice versa. This hinges on a simple observation. If $TJ=JT^*$ for some operator $T$ and conjugation $J$, then $T$ is cyclic if and only if $T^*$ is cyclic. The conjugation $J$ serves as a bijection between the cyclic vectors of $T$ and $T^*$. We demonstrate this by first dealing with the case when $0<|a|<1$.

\begin{prop}\label{case a<1} Let $\phi(z)=az+b$ where $a\in \mathbb{R}$, $0<|a|<1$ and $b\in\mathbb{C}$. Then $C_\phi$ is not cyclic whereas $C_\phi^*$ is cyclic on $B^2_\sigma$ for $\sigma>0$. Hence $C_\phi $ is not complex symmetric.
\end{prop}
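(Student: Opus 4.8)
The plan is to treat the three assertions separately, exploiting the Fourier model from Proposition~\ref{adjoint formula} for the first and the reproducing kernels for the second. For the \emph{non-cyclicity of $C_\phi$}, I would pass to the Fourier side and observe that $C_\phi^n=C_{\phi^{[n]}}$ has linear coefficient $a^n$, so by Proposition~\ref{adjoint formula} the transform $\widehat{C_\phi^n f}=\widehat{C}_{\phi^{[n]}}\widehat{f}$ is supported in $(-|a|^n\sigma,|a|^n\sigma)\subseteq(-|a|\sigma,|a|\sigma)$ for every $n\ge 1$. Setting $M:=\{g\in B^2_\sigma:\operatorname{supp}\widehat{g}\subseteq[-|a|\sigma,|a|\sigma]\}$, which is a closed subspace isometrically identified with $B^2_{|a|\sigma}$, the entire orbit of any $f$ lies in $\mathbb{C}f+M$. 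Since $0<|a|<1$, the complement $L^2[-\sigma,\sigma]\ominus L^2[-|a|\sigma,|a|\sigma]$ is infinite-dimensional, so $M^\perp$ is infinite-dimensional, and the condition of being orthogonal to the single vector $f$ cuts its dimension down by at most one. Hence $(\mathbb{C}f+M)^\perp=\{h\in M^\perp:\langle h,f\rangle=0\}\neq\{0\}$, and any nonzero vector there is orthogonal to the whole orbit. Thus $\overline{\operatorname{span}}\,\mathrm{Orb}(f,C_\phi)\neq B^2_\sigma$ for every $f$, i.e. $C_\phi$ is not cyclic.

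For the \emph{cyclicity of $C_\phi^*$}, I would use $C_\phi^*k_w=k_{\phi(w)}$, which iterates to $(C_\phi^*)^n k_w=k_{\phi^{[n]}(w)}$. Fix any $w\neq\alpha$, where $\alpha=b/(1-a)$ is the attractive fixed point; from \eqref{iterate} one has $\phi^{[n]}(w)=\alpha+a^n(w-\alpha)$, a sequence of pairwise distinct points converging to $\alpha$. I claim $k_w$ is a cyclic vector: if $g\in B^2_\sigma$ satisfies $g\perp(C_\phi^*)^n k_w$ for all $n$, then the reproducing property gives $g(\phi^{[n]}(w))=\langle g,k_{\phi^{[n]}(w)}\rangle=0$ for every $n$, so the entire function $g$ vanishes on a set with the finite accumulation point $\alpha$. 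By the identity theorem $g\equiv 0$, whence $\overline{\operatorname{span}}\{(C_\phi^*)^n k_w:n\ge 0\}=B^2_\sigma$ and $C_\phi^*$ is cyclic.

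Finally, the \emph{failure of complex symmetry} is immediate from the first two parts together with the general principle recorded in the introduction: if $C_\phi$ were complex symmetric, there would be a conjugation $J$ with $JC_\phi=C_\phi^*J$, and then $C_\phi$ would be cyclic if and only if $C_\phi^*$ is. Since $C_\phi^*$ is cyclic while $C_\phi$ is not, no such $J$ can exist, so $C_\phi$ is not complex symmetric.

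The routine verifications are the support bookkeeping on the Fourier side and the closed-form for $\phi^{[n]}$; the one point that deserves genuine care is establishing that the orbit of $C_\phi$ is trapped in a \emph{proper} subspace. It is not enough that each orbit vector beyond $f$ lands in $M$: one must confirm that adjoining the single vector $f$ cannot fill the infinite-codimensional space $M$, which is precisely where the strict inequality $|a|<1$ (forcing $M$ to have infinite codimension) enters. Everything else rests on the identity theorem and the adjoint action on reproducing kernels.
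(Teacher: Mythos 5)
Your proof is correct and follows essentially the same route as the paper: the Fourier-side support argument trapping the orbit of $C_\phi$ in $\mathbb{C}f+B^2_{|a|\sigma}$ (a proper closed subspace of infinite codimension), the adjoint action on reproducing kernels together with the identity theorem for $C_\phi^*$, and the cyclicity-transfer principle for conjugations. Your explicit restriction to $w\neq\alpha$ is in fact slightly more careful than the paper's proof, which asserts that \emph{every} $k_w$ is a cyclic vector for $C_\phi^*$ even though $k_\alpha$ has a one-point orbit.
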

\begin{proof} We know that $C_\phi$ on $B^2_\sigma$ is unitarily equivalent to $\widehat{C}_\phi$ on $L^2[-\sigma,\sigma]$ by Proposition \ref{adjoint formula}. So we show that $\widehat{C}_\phi$ is not cyclic. For any $F\in L^2[-\sigma,\sigma]$ we have $\widehat{C}_\phi F=\chi_{(-|a|\sigma, |a|\sigma)}G$ for some $G\in L^2[-\sigma,\sigma]$. So every element in $\mathrm{span}\{\widehat{C}_\phi^n F:n=0,1\ldots\}$ must have the form $cF+H$ where $H$ vanishes ouside $(-|a|\sigma, |a|\sigma)$. In other words, the closure of this span consists only of functions that coincide with a constant multiple of $F$ outside $(-|a|\sigma, |a|\sigma)$. This clearly makes the cyclicity of $\widehat{C}_\phi$ and hence of $C_\phi$ impossible. For $C^*_\phi$ we show that every reproducing kernel $k_w$ is a cyclic vector. This is because $(C^{*}_\phi )^nk_w=C^*_{\phi^{[n]}}k_w=k_{\phi^{[n]}(w)}$ and $\phi^{[n]}(w)$ converges to the attractive fixed point $\alpha:=\frac{b}{1-a}$ of $\phi$. So any $f\in B^2_\sigma$ orthogonal to the orbit of $k_w$ under $C^*_\phi$ must vanish on a sequence with a limit point and hence $f\equiv 0$. Therefore the span of the orbit of every $k_w$ must be dense in $B^2_\sigma$ and hence $C_\phi^*$ is cyclic. It then follows that $C_\phi$ is not complex symmetric by the discussion above. 
\end{proof}

By Proposition \ref{case a<1} we need only consider the cases $a=\pm1$ for the following result. Note that in both cases ($J_af)(z)=\overline{f(-a\bar{z})}$ defines a conjugation on $B^2_\sigma$ for $\sigma>0$.

\begin{prop}\label{normal}Let $\phi(z)=az+b$ where $a=\pm1$ and $b\in \mathbb{C}$. Then $C_{\phi}$ on $B^2_{\sigma}$ is
\begin{enumerate}
\item always complex symmetric,
\item normal if and only if $a=1$, $b\in\mathbb{C}$ or $a=-1, b\in \mathbb{R}$,
\item self-adjoint if and only if $a=1,b\in i\mathbb{R}$ or $a=-1,b\in \mathbb{R},$
\item unitary if and only if $b\in \mathbb{R}.$
\end{enumerate}
Moreover $C_\phi$ is $J_a$-symmetric on $B^2_{\sigma}$.
\end{prop}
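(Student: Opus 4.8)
The plan is to push everything through the Fourier transform $\mathfrak{F}:B^2_\sigma\to L^2[-\sigma,\sigma]$ of Proposition \ref{adjoint formula}, since unitary equivalence preserves complex symmetry, normality, self-adjointness and unitarity. Specializing the formulas there to $a=\pm1$ (so that $|a|=1$, $1/a=a$ and $\chi_{(-|a|\sigma,|a|\sigma)}\equiv 1$ on $[-\sigma,\sigma]$), I would record the two clean identities $(\widehat{C}_\phi F)(t)=e^{iabt}F(at)$ and $(\widehat{C}_\phi^* F)(t)=e^{-i\bar b t}F(at)$, the latter using $\overline{e^{ibt}}=e^{-i\bar b t}$ for real $t$. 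Every assertion then reduces to an elementary manipulation of these multiplication-and-flip operators on $L^2[-\sigma,\sigma]$.

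To handle (1) and the final $J_a$-symmetry claim at once, I would transport the conjugation to the Fourier side. A direct computation with the inverse transform (substitute $s=at$ and conjugate) gives $\widehat{J}_a:=\mathfrak{F}J_a\mathfrak{F}^{-1}$ in the explicit form $(\widehat{J}_a F)(t)=\overline{F(at)}$; this is a conjugation on $L^2[-\sigma,\sigma]$, as already noted for $J_a$ in the remark preceding the proposition, and is immediate since $a^2=1$ yields $\widehat{J}_a^2=I$ while the change of variables $s=at$ yields the isometry $\langle \widehat{J}_a F,\widehat{J}_a G\rangle=\langle G,F\rangle$. The relation $J_aC_\phi=C_\phi^*J_a$ is equivalent, after conjugating by $\mathfrak{F}$, to $\widehat{J}_a\widehat{C}_\phi=\widehat{C}_\phi^*\widehat{J}_a$. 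Evaluating both sides on $F$ and using $a^2=1$, the left side is $\overline{(\widehat{C}_\phi F)(at)}=\overline{e^{ibt}F(t)}=e^{-i\bar b t}\,\overline{F(t)}$, while the right side is $e^{-i\bar b t}(\widehat{J}_a F)(at)=e^{-i\bar b t}\,\overline{F(t)}$; these agree, so $C_\phi$ is $J_a$-symmetric and hence complex symmetric.

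For (2)--(4) I would compute the two operator products once and for all. Composing the two displayed formulas and again invoking $a^2=1$ gives $(\widehat{C}_\phi^*\widehat{C}_\phi F)(t)=e^{i(b-\bar b)t}F(t)=e^{-2t\,\mathrm{Im}\,b}F(t)$ and $(\widehat{C}_\phi\widehat{C}_\phi^* F)(t)=e^{ia(b-\bar b)t}F(t)=e^{-2at\,\mathrm{Im}\,b}F(t)$. The remaining parts are then read off directly: normality ($\widehat{C}_\phi\widehat{C}_\phi^*=\widehat{C}_\phi^*\widehat{C}_\phi$) holds iff $(a-1)\,\mathrm{Im}\,b=0$, i.e. iff $a=1$ with any $b$ or $a=-1$ with $b\in\mathbb{R}$; unitarity ($\widehat{C}_\phi^*\widehat{C}_\phi=I$) holds iff $\mathrm{Im}\,b=0$, i.e. iff $b\in\mathbb{R}$, for both values of $a$; and self-adjointness ($\widehat{C}_\phi=\widehat{C}_\phi^*$, equivalently $e^{iabt}=e^{-i\bar b t}$, equivalently $ab=-\bar b$) holds iff $b\in i\mathbb{R}$ when $a=1$ and iff $b\in\mathbb{R}$ when $a=-1$. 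These match the stated characterizations.

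Since each computation is short, there is no serious obstacle; the only place requiring genuine care is the $a=-1$ case, where the argument-flip $t\mapsto -t$ present in both $\widehat{C}_\phi$ and $\widehat{J}_a$ interacts with the complex conjugation. The bookkeeping facts that make every identity collapse are $a^2=1$, $1/a=a$, and $\overline{e^{ibt}}=e^{-i\bar b t}$ for $t\in\mathbb{R}$, and keeping these straight is essentially the whole of the work.
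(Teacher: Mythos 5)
Your proposal is correct: all the Fourier-side formulas specialize as you claim when $|a|=1$, the computation $\widehat{J}_a F=\overline{F(a\cdot)}$ is right, and the products $\widehat{C}_\phi^*\widehat{C}_\phi=M_{e^{-2t\,\mathrm{Im}(b)}}$ and $\widehat{C}_\phi\widehat{C}_\phi^*=M_{e^{-2at\,\mathrm{Im}(b)}}$ give exactly the stated characterizations of normality, self-adjointness and unitarity. The overall strategy is the same as the paper's (push everything through $\mathfrak{F}$ and use the formulas of Proposition \ref{adjoint formula}), but you organize the logic differently in two places. First, the paper establishes (1) by two separate soft arguments: for $a=-1$ it invokes $C_\phi^2=I$ and the Garcia--Putinar theorem that operators algebraic of order $2$ are complex symmetric, and for $a=1$ it observes that $\widehat{C}_\phi=M_{e^{ibt}}$ is normal; you instead deduce (1) directly from the explicit $J_a$-symmetry, which makes the proof self-contained and exhibits a concrete conjugation in all cases at no extra cost. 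Second, for the $J_a$-symmetry itself the paper computes $J_aC_\phi J_a$ on the function side, identifies it as $C_\psi$ with $\psi(z)=az-a\bar b$, and then checks $C_\psi=C_\phi^*$ on the Fourier side, whereas you transport $J_a$ itself to $L^2[-\sigma,\sigma]$ and verify $\widehat{J}_a\widehat{C}_\phi=\widehat{C}_\phi^*\widehat{J}_a$ there; the two verifications are of comparable length. The only cosmetic point worth tightening is that unitarity requires both $\widehat{C}_\phi^*\widehat{C}_\phi=I$ and $\widehat{C}_\phi\widehat{C}_\phi^*=I$ (or isometry plus surjectivity); since you computed both products and both reduce to $\mathrm{Im}\,b=0$, nothing is actually missing.
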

\begin{proof} We first note that when $a=-1$ we have $C^2_\phi=I$ and hence $C_\phi$ is complex symmetric since every operator that is algebraic of order $2$ is complex symmetric by \cite[Thm. 2]{Garcia 1}. In general by Proposition \ref{adjoint formula} we have \[
(\widehat{C}_\phi F)(t) =e^{\frac{ibt}{a}}F(t/a) \ \ \ \mathrm{and} \ \ \ (\widehat{C}^*_\phi F)(t) =e^{\overline{ibt}}F(at)
\]
for $F\in L^2[-\sigma,\sigma]$. So when $a=1$ we see that $\widehat{C}_\phi=M_{e^{ibt}}$ is a multiplication operator which is normal. This shows that $C_\phi$ is always complex symmetric and gives $(1)$. If $a=-1$, then $\widehat{C}_\phi \widehat{C}_\phi^*=\widehat{C}_\phi ^*\widehat{C}_\phi$ precisely when $e^{-ibt}e^{-\overline{ibt}}=e^{\overline{ibt}}e^{ibt}$ or when $e^{-i2\mathrm{Im}(b)t}=e^{i2\mathrm{Im}(b)t}$, and this holds only when $b\in\mathbb{R}$. This gives $(2)$ and one easily obtains $(3)$ and $(4)$ similarly. Finally we show that $C_\phi$ is $J_a$-symmetric by first noting that
\[
(J_a C_\phi J_af)(z)=\overline{(C_\phi J_af)(-a\bar{z})}=\overline{(J_af)(-\bar{z}-a\bar{b})}=f(az-a\bar{b})=(C_\psi f)(z)
\]
where $\psi(z)=az-a\bar{b}$. We claim that $C_\psi=C_\phi^*$ and this follows by Proposition \ref{adjoint formula} because \[(\widehat{C}_{\psi}F)(t)=e^{-i\bar{b}t}F(t/a)=\overline{e^{ibt}}F(at)=(\widehat{C}_{\phi}^*F)(t)\] for all $F\in L^2[-\sigma,\sigma]$. Therefore $J_a C_\phi J_a=C_\phi^*$ and completes the proof of the result.
\end{proof}

Our main result characterizes the cyclicity of $C_\phi$ and $C_\phi^*$ simultaneously using complex symmetry, and then shows that no $C_\phi$ is supercyclic on $B^2_\sigma$ for all $\sigma>0$ via normality. The latter was proved for the case $\sigma=\pi$ in \cite[Thm. 2.7]{chacon2007composition} using a lengthier argument.

\begin{thm}\label{cyclic}Let $\phi(z)=az+b$ where $a\in\mathbb{R}$, $0<|a|\leq 1$ and $b\in \mathbb{C}$. Then $C_{\phi}$ on $B^2_\sigma$ is
\begin{enumerate}
\item cyclic if and only if $a=1$ with $b\in\mathbb{C}\setminus\mathbb{R}$ or $0<|b|\leq\pi/\sigma$, $b\in\mathbb{R}$, and
\item never supercyclic.
\end{enumerate}
Moreover $C_\phi^*$ is cyclic if and only if $0<|a|<1$ or when $C_\phi$ is cyclic.
\end{thm}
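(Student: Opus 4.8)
The plan is to reduce everything to the multiplication-operator model of Proposition \ref{adjoint formula} and to exploit the complex symmetry established in Propositions \ref{case a<1} and \ref{normal}. By Proposition \ref{case a<1} the range $0<|a|<1$ is already settled: there $C_\phi$ is not cyclic (hence not supercyclic, since supercyclicity implies cyclicity) while $C_\phi^*$ is cyclic. So for statements (1)--(2) I only need $a=\pm1$. When $a=-1$ the iterate formula \eqref{iterate} gives $\phi^{[2]}(z)=z$, so $C_\phi^2=I$ and the orbit of any $f$ spans at most $\mathrm{span}\{f,C_\phi f\}$, which is never dense in the infinite-dimensional space $B^2_\sigma$; thus $C_\phi$ is neither cyclic nor supercyclic. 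This leaves the decisive case $a=1$, where Proposition \ref{adjoint formula} gives $C_\phi\cong M_{e^{ibt}}$, multiplication by $\psi(t)=e^{ibt}$ on $L^2[-\sigma,\sigma]$, a normal (indeed, for real $b$, unitary) operator.

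For $a=1$ I would show that $C_\phi$ is cyclic precisely when $\psi$ is injective almost everywhere on $[-\sigma,\sigma]$. The equation $\psi(t_1)=\psi(t_2)$ forces $b(t_1-t_2)\in 2\pi\mathbb{Z}$; this has only trivial solutions a.e. exactly when $b\notin\mathbb{R}$, or $b\in\mathbb{R}$ with $0<|b|\le\pi/\sigma$ (at $|b|=\pi/\sigma$ the only coincidence is $\{t_1,t_2\}=\{-\sigma,\sigma\}$, a null set), and fails on a set of positive measure when $b\in\mathbb{R}$ with $|b|>\pi/\sigma$, while $\psi\equiv1$ when $b=0$. For \emph{necessity}, when $\psi$ is two-to-one on a positive-measure set the normal operator $M_\psi$ has spectral multiplicity at least two (equivalently its commutant is non-abelian, containing a nontrivial swap of the fibers), so it is not cyclic; and $b=0$ gives $C_\phi=I$, also not cyclic. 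For \emph{sufficiency} I split according to the essential range $\Gamma=\psi([-\sigma,\sigma])$. When $b\notin\mathbb{R}$, or $b\in\mathbb{R}$ with $0<|b|<\pi/\sigma$, the set $\Gamma$ is a simple arc (a logarithmic-spiral or circular arc, or a real segment when $b\in i\mathbb{R}$); such a compact set has empty interior and connected complement, so by Lavrentiev's theorem polynomials are dense in $C(\Gamma)$, and hence the orbit of the constant function $1$, namely $\{\psi^n\}_{n\ge0}$, is already dense in $L^2[-\sigma,\sigma]$.

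The one genuinely delicate case, and the main obstacle, is the boundary $b\in\mathbb{R}$ with $|b|=\pi/\sigma$: here $\Gamma$ is the full unit circle, so $M_\psi$ is unitarily equivalent to the bilateral shift $M_z$ on $L^2(\mathbb{T})$, for which the naive candidate $1$ is \emph{not} cyclic (its orbit closure is only $H^2$). Cyclicity must instead be produced by a cleverly chosen non-vanishing vector. Concretely, I would pick $g\in L^2(\mathbb{T})$ with $g\ne0$ a.e. but $\log|g|\notin L^1(\mathbb{T})$, and show that its forward-invariant subspace $M=\overline{\mathrm{span}}\{z^ng:n\ge0\}$ is all of $L^2(\mathbb{T})$: by the Helson classification of shift-invariant subspaces, $M$ is either $qH^2$ with $q$ unimodular or $\chi_E L^2$; the first is impossible because $g\in qH^2$ would force $\log|g|\in L^1$, and the second gives $E=\{g\ne0\}=\mathbb{T}$ a.e., whence $M=L^2(\mathbb{T})$. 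Pulling this back through $\mathfrak{F}$ yields a cyclic vector for $C_\phi$ at $|b|=\pi/\sigma$. This dovetails with Theorem \ref{cyclic vector}: at the boundary the reproducing kernels fail to be cyclic, since their orbits correspond to the incomplete family $\{e^{-inbt}\}_{n\ge0}$, yet the operator is cyclic through this non-kernel vector.

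Finally, for supercyclicity I would note that for $a=1$ the operator $C_\phi$ is normal by Proposition \ref{normal}, and no normal operator on an infinite-dimensional Hilbert space is supercyclic; together with the fact that $a=-1$ and $0<|a|<1$ already fail to be cyclic, this gives (2). For the adjoint statement, Proposition \ref{case a<1} supplies cyclicity of $C_\phi^*$ whenever $0<|a|<1$, while for $a=\pm1$ Proposition \ref{normal} shows $C_\phi$ is $J_a$-symmetric, so the intertwining $J_aC_\phi J_a=C_\phi^*$ makes $J_a$ a bijection between the cyclic vectors of $C_\phi$ and those of $C_\phi^*$; hence $C_\phi^*$ is cyclic if and only if $C_\phi$ is. Combining these observations yields exactly the stated criterion, namely that $C_\phi^*$ is cyclic if and only if $0<|a|<1$ or $C_\phi$ is cyclic.
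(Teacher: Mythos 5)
Your proposal is correct and reaches the stated characterization, but it takes a genuinely different route at the decisive step. The paper, after the same reductions (Proposition \ref{case a<1} for $0<|a|<1$, the involution $C_\phi^2=I$ for $a=-1$, normality plus Hilden--Wallen for supercyclicity, and $J_a$-symmetry for the adjoint claim), disposes of the case $a=1$ in one stroke by quoting the Ross--Wogen criterion: a multiplication operator $M_\Phi$ on $L^2(\mu)$ is cyclic if and only if $\Phi$ is injective on a set of full measure. It then only has to check injectivity of $t\mapsto e^{ibt}$, which at the boundary $|b|=\pi/\sigma$ fails only at the null set $\{-\sigma,\sigma\}$, so that case requires no special treatment. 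You instead prove the criterion by hand: necessity via the spectral-multiplicity (non-abelian commutant) argument when $e^{ibt}$ is two-to-one on a set of positive measure, sufficiency via Lavrentiev's theorem on the arc $\Gamma=\psi([-\sigma,\sigma])$ when $\psi$ is globally injective, and --- the one case your arc argument cannot reach --- the bilateral-shift case $|b|=\pi/\sigma$ via the Wiener--Helson classification of $M_z$-invariant subspaces applied to a nonvanishing $g$ with $\log|g|\notin L^1(\mathbb{T})$. All of these steps are sound (such a $g$ exists, e.g.\ $g(\theta)=e^{-1/|\theta|}$, and a simple arc does have empty interior and connected complement). What your approach buys is self-containedness and an explicit cyclic vector in the boundary case, which moreover explains \emph{why} the reproducing kernels fail there (their Fourier transforms are exponentials, whose orbit closure is only a ``half'' of $L^2$), anticipating Theorem \ref{cyclic vector}(2); what it costs is length and the need to split the sufficiency proof into two structurally different arguments where the paper's citation handles both uniformly.
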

\begin{proof} The case $0<|a|<1$ has been dealt with in Proposition \ref{case a<1} showing that $C_\phi$ is not cyclic or supercyclic. So first note that when $a=-1$ then $C_\phi^2f=f$ and hence $C_\phi$ cannot be cyclic or supercyclic as the orbit of any $f\in B^2_\sigma$ contains at most two elements $f$ and $f\circ\phi$. So let $a=1$. Then $C_\phi$ is normal and normal operators are never supercyclic \cite[p. 564]{Hilden}. Therefore this proves $(2)$. For cyclicity we first observe that $\widehat{C}_\phi=M_{e^{ibt}}$ is a multiplication operator on $L^2[-\sigma,\sigma]$. A consequence of the spectral theory for normal operators is that a multiplication operator $M_\Phi$ on $L^2(\mu)$ (where $\Phi\in L^\infty(\mu)$ and $\mu$ is a compactly supported measure on $\mathbb{C}$) is cyclic if and only if $\Phi$ is injective on a set of full measure (see \cite[Thm. 1.1 and Prop. 1.3]{Ross}). Therefore we need to determine when $\Phi(t)=e^{ibt}$ is injective almost everywhere on $[-\sigma,\sigma]$. For $b\in\mathbb{C}\setminus\mathbb{R}$ we see that $|\Phi(t)|=e^{-\mathrm{Im}(b)t}$ which is clearly injective on all of $[-\sigma,\sigma]$ and hence so is $\Phi$. For the case when $b\in\mathbb{R}$ note that $\Phi$ is $2\pi/|b|$ periodic. It follows that $\Phi$ is injective on $(-\sigma,\sigma)$ (hence a.e on $[-\sigma,\sigma]$) precisely when the period is greater than or equal to the length of the interval, that is, precisely when
\[
\frac{2\pi}{|b|}\geq 2\sigma \ \ \ \mathrm{equivalently} \ \ \ 0<|b|\leq\frac{\pi}{\sigma}.
\]
The assertion about cyclicity of $C_\phi^*$ follows by Proposition \ref{case a<1} and complex symmetry. 
\end{proof}
Finally we ask the following question: \emph{Are any of the reproducing kernels $(k_w)_{w\in\mathbb{C}}$ cyclic vectors for $C_\phi$}? The answer reveals an interesting dichotomy and a connection with the question of completeness of exponential sequences in $L^2[-\sigma,\sigma]$. 

\begin{thm}\label{cyclic vector}Let $\phi(z)=z+b$ where $b\in \mathbb{C}$. Then the kernels $(k_w)_{w\in\mathbb{C}}$ in $B^2_\sigma$ are 
\begin{enumerate}
\item all cyclic vectors for $C_\phi$ when $b\in\mathbb{C}\setminus\mathbb{R}$ or $0<|b|<\pi/\sigma$, $b\in\mathbb{R}$, and
\item never cyclic for $C_\phi$ when $|b|=\pi/\sigma$. 
\end{enumerate}
\end{thm}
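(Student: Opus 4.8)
The plan is to transfer everything to $L^2[-\sigma,\sigma]$ through the Fourier transform $\mathfrak{F}$ and recast cyclicity of $k_w$ as a polynomial density statement. Since $a=1$, Proposition \ref{adjoint formula} gives $\widehat{C}_\phi=M_{e^{ibt}}$, and $\widehat{k_w}(t)=e^{-i\bar w t}$. Hence the orbit of $k_w$ under $C_\phi$ is carried to $\{e^{i(nb-\bar w)t}:n\ge 0\}$, so $k_w$ is cyclic iff this exponential system is complete in $L^2[-\sigma,\sigma]$. Multiplying the whole system by the invertible factor $e^{i\bar w t}$ (bounded above and below on the compact interval) preserves closed linear spans, so completeness is equivalent to that of $\{e^{inbt}:n\ge0\}$, which is independent of $w$. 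This already explains the dichotomy in the statement: either every $k_w$ is cyclic or none is. Equivalently, $k_w$ is cyclic iff the constant function $1$ is a cyclic vector for $M_{e^{ibt}}$, i.e. iff the polynomials in $\Phi(t):=e^{ibt}$ are dense in $L^2[-\sigma,\sigma]$. On the $B^2_\sigma$ side this is the statement that the points appearing in $C_\phi^n k_w=k_{w-n\bar b}$ form a uniqueness set $\{w-n\bar b:n\ge0\}$ for $B^2_\sigma$.

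For part (1) I would establish polynomial density by approximation on the image curve. Because $\Phi$ is continuous and injective on $[-\sigma,\sigma]$ --- injectivity holds for every non-real $b$, where $\Phi$ traces a logarithmic-spiral arc, and for real $b$ with $|b|<\pi/\sigma$, where $\Phi$ traces a circular arc of opening $2|b|\sigma<2\pi$ --- the image $K:=\Phi([-\sigma,\sigma])$ is a Jordan arc. A Jordan arc has empty interior and connected complement, so by a classical approximation theorem of Lavrentiev (a Mergelyan-type result) polynomials are uniformly dense in $C(K)$, hence dense in $L^2(\Phi_*\,dt)$. Pulling back through the unitary $h\mapsto h\circ\Phi$ from $L^2(K,\Phi_*\,dt)$ onto $L^2[-\sigma,\sigma]$ converts this into density of $\{p(\Phi)\}$, giving cyclicity of $1$ and hence of every $k_w$.

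For part (2), when $b$ is real with $|b|=\pi/\sigma$ the map $\Phi$ no longer yields a proper arc: $\Phi(-\sigma)=\Phi(\sigma)=-1$, so $K$ is the full unit circle, whose complement is disconnected. Here I would argue directly that $1$ is not cyclic: $\{(2\sigma)^{-1/2}e^{in(\pi/\sigma)t}:n\in\mathbb{Z}\}$ is an orthonormal basis of $L^2[-\sigma,\sigma]$, and the orbit of $1$ produces only the indices $n\ge0$, so its closed span is the proper subspace of functions whose negative Fourier coefficients vanish. Thus no $k_w$ is cyclic. The same failure is visible on the $B^2_\sigma$ side: $f(z)=\dfrac{\sin\sigma(z-w)}{\,z-w-\pi/\sigma\,}$ is a nonzero element of $B^2_\sigma$ that vanishes at every orbit point $w-n\bar b$, certifying that the orbit does not span densely.

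The main obstacle is conceptual rather than computational: one must separate cyclicity of the operator $C_\phi$ (which, by Theorem \ref{cyclic}, persists at $|b|=\pi/\sigma$) from cyclicity of the specific vector $1\leftrightarrow k_w$. The injectivity criterion for multiplication operators only detects operator cyclicity and cannot distinguish the boundary case; the decisive extra input is genuine polynomial density on the image set, which is precisely sensitive to whether the arc $K$ closes into a Jordan curve. Consequently the crux is to verify that $K$ is a proper arc exactly when $b\notin\mathbb{R}$ or $b\in\mathbb{R}$ with $|b|<\pi/\sigma$, and then to apply the Lavrentiev/Mergelyan hypotheses (empty interior and connected complement) correctly. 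I note that the real case could alternatively be handled self-containedly by periodizing an annihilator with period $2\pi/|b|>2\sigma$ and using that a function with one-sided spectrum cannot vanish on the resulting gap of positive measure.
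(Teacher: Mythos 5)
Your proposal is correct, and for part (1) it takes a genuinely different route from the paper. You reduce everything (for every $b$) to the completeness of $(e^{inbt})_{n\geq 0}$ in $L^2[-\sigma,\sigma]$ via the invertible multiplier $e^{i\bar wt}$, and then prove that completeness by applying Lavrentiev's theorem to the image arc $K=\Phi([-\sigma,\sigma])$, $\Phi(t)=e^{ibt}$: when $\Phi$ is injective, $K$ is a Jordan arc (empty interior, connected complement), so analytic polynomials are uniformly dense in $C(K)$, hence $\{p\circ\Phi\}$ is dense in $L^2[-\sigma,\sigma]$. The paper instead splits the cases: for $b\in\mathbb{C}\setminus\mathbb{R}$ it works directly in $B^2_\sigma$, showing that the orbit points $w-n\bar b$ violate the Blaschke-type condition for zero sets and hence form a uniqueness set; for $b\in\mathbb{R}$ with $0<|b|<\pi/\sigma$ it invokes Carleman's completeness theorem for exponential systems ($\liminf n/\lambda_n>\sigma/\pi$). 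Your argument is more unified — a single mechanism covers all cyclic cases and makes geometrically transparent why $|b|=\pi/\sigma$ is the exact breaking point (the arc closes into the full circle, whose complement is disconnected) — and your observation that operator-cyclicity of $M_\Phi$ (injectivity a.e.) does not decide cyclicity of the specific vector $1$ is exactly the right conceptual distinction. The paper's route, by contrast, stays closer to the non-harmonic Fourier analysis theme it wants to highlight. Part (2) is essentially the paper's argument (the bilateral orthogonal basis, of which the orbit spans only the nonnegative half); your supplementary explicit annihilator $f(z)=\sin\sigma(z-w)/(z-w-\pi/\sigma)$ works as written for $b=+\pi/\sigma$ but for $b=-\pi/\sigma$ the orbit points are $w+n\pi/\sigma$ and include the removed zero at $n=1$, so there you should divide by $z-w+\pi/\sigma$ instead — a trivial sign fix in a remark that your main argument does not rely on.
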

\begin{proof} We first consider the case when $b\in\mathbb{C}\setminus\mathbb{R}$. Note that since $\phi^{[n]}(z)=z+nb$, one easily sees that $C_{\phi}^nk_w=k_{w-n\bar{b}}$ for $w\in\mathbb{C}$. So any function $f\in B^2_\sigma$ orthogonal to the span of the orbit of $k_w$ must vanish at $w_n:=w-n\bar{b}$ for all $n\geq 0$. But $(w_n)_{n\geq 0}$ does not satisfy the Blaschke-Type condition required for zero sets of functions in $B^2_\sigma$, that is
\[
\sum_{n\geq 0}\frac{|\mathrm{Im}(w_n)|}{1+|w_n|^2}\simeq \sum_{n\geq 0}\frac{n}{1+n^2}=\infty.
\]
Hence $f\equiv 0$ and span$(C_{\phi}^nk_w)_{n\geq 0}$ is dense in $B^2_\sigma$. So now let $b\in\mathbb{R}$. Since $(\mathfrak{F}k_w)(t)=e^{-i\bar{w}t}$ for $w\in\mathbb{C}$, it is enough to show that every exponential $e^{iwt}$ is a cyclic vector for $\widehat{C}_\phi=M_{e^{ibt}}$. This is equivalent to the completeness of the sequence $(e^{i(bn+w)t})_{n\geq 0}$ in $L^2[-\sigma,\sigma]$. But this sequence is the image of $(e^{ibnt})_{n\geq 0}$ under  $M_{e^{iwt}}$ which has dense range in $L^2[-\sigma,\sigma]$. Therefore it is sufficient to prove the completeness of $(e^{ibnt})_{n\geq 0}$ in $L^2[-\sigma,\sigma]$. For this we use a classical result of Carleman (see \cite[page 97]{young}): If $(\lambda_n)_{n\geq 0}$ is a sequence of positive real numbers and 
\[
\liminf_{n\to\infty}\frac{n}{\lambda_n}>\frac{\sigma}{\pi},
\]
then $(e^{i\lambda_nt})_{n\geq0}$ is complete in $\mathcal{C}[-\sigma,\sigma]$ (space of continuous functions). This condition is clearly satisfied for $\lambda_n:=bn$ when $0<b<\pi/\sigma$. The case $-\pi/\sigma<b<0$ follows since now $(e^{-ibnt})_{n\geq0}$ is complete and then conjugating. If $|b|=\pi/\sigma$, then the exponentials $(e^{ibnt})_{n\geq0}$ are $2\sigma$-periodic as we saw in the proof of Theorem \ref{cyclic}. It follows that $(e^{ibnt})_{n\in\mathbb{Z}}$ is an orthogonal basis for $L^2[-\sigma,\sigma]$. So $(e^{ibnt})_{n\geq0}$ in not complete in $L^2[-\sigma,\sigma]$ and neither is its image under the injective operator $M_{e^{iwt}}$. This completes the proof.
\end{proof}

The last result hints at the possibility of finding deeper connections between  composition operators on model subspaces $K_\Theta$ of $H^2(\mathbb{C}_+)$ and non-harmonic Fourier analysis.
%===================================================================================================

%----------------------------------------------------------------------------------------------------

\end{document}